\def\det{{\mathrm{det}}}
\def\T{{\top}}
\def\tr{{\operatorname{Trace}}}
\def\phi{{\varphi}}
\def\R{{\mathbb R}}
\def\N{{\mathbb N}}
\def\cal{\mathcal}
\def\cH{{\cal H}}
\def\det{\operatorname{det}}
\def\dim{\operatorname{dim}}
\def\O{\Omega}
\theoremstyle{plain}
\newtheorem{theorem}{Theorem}[section]
\newtheorem{lemma}[theorem]{Lemma}
\theoremstyle{definition}
\newtheorem{definition}[theorem]{Definition}
\def\p{{\hat{p}}}
\newcommand{\MSE}[1]{{{\rm MSE}\left[{#1}\right]}}
\newcommand{\Exp}[1]{{\mathbb{E}\left[{#1}\right]}}
\newcommand{\Bias}[1]{{{\rm Bias}\left[{#1}\right]}}
\newcommand{\Var}[1]{{{\rm Var}\left[{#1}\right]}}
\title{Density Estimation on Rectifiable Sets}
\author{Jack Kendrick}
\address{Department of Mathematics, University of Washington}
\email{jackgk@uw.edu}
\thanks{The author would like to thank Dmitriy Drusvyatskiy for helpful discussions regarding the content of this paper.}
\subjclass[2020]{Primary 62G05, 62G07; Secondary 28A75}
\date{\today}
\begin{document}

\begin{abstract}
    Kernel density estimation is a popular method for estimating unseen probability distributions. However, the convergence of these classical estimators to the true density slows down in high dimensions. Moreover, they do not define meaningful probability distributions when the intrinsic dimension of data is much smaller than its ambient dimension. We build on previous work on density estimation on manifolds to show that a modified kernel density estimator converges to the true density on $d-$rectifiable sets. As a special case, we consider algebraic varieties and semi-algebraic sets and prove a convergence rate in this setting. We conclude the paper with a numerical experiment illustrating the convergence of this estimator on sparse data.
\end{abstract}

\maketitle

\section{Introduction}

Density estimation is a canonical problem in statistics: given $n$ random vectors $X_1, \ldots, X_n\in \R^D$ sampled independently from a probability distribution $P,$ we would like to estimate the density of $P$ at some unseen point $x\in\R^D.$ One popular method for estimating unseen density functions is through the use of a {\em kernel density estimator}:
\begin{equation}\label{eq:parzen-kde}
    {\mathrm{KDE}}_n(x) = \frac{1}{h_n^D}\sum_{i=1}^nK\left(\frac{\|X_i-x\|}{h_n}\right)
\end{equation}
where $K$ is some kernel function and $h_n$ is a user-defined bandwidth parameter. The kernel density estimator defines a new probability distribution on $\R^D$ that we hope converges to the true distribution $P$ as the number of samples $n$ increases. Indeed, foundational work by Parzen \cite{parzen} (when $D=1$) and Cacoullos \cite{cacoullos1966estimation} (when $D>1$) showed that when the bandwidth parameter $h_n$ is chosen appropriately so that $\lim_{n\to\infty}h_n= 0$ and $\lim_{n\to\infty}nh_n^d=\infty,$ then the estimator $\mathrm{KDE}_n$ converges to the true density $p.$ In particular, when $h_n$ is chosen in proportion to $n^{-1/(D+4)},$ the mean square error (MSE) of $\p$ is of the order
\begin{equation}
    \MSE{\mathrm{KDE}_n(x)} = O\left(\frac{1}{n^{4/(D+4)}}\right)
\end{equation}
as $n\to\infty.$ So, while the estimator $\mathrm{KDE}_n(x)$ does well-approximate the true density $p(x)$ as the number of samples increases, this convergence slows down as the ambient dimension of the data $D$ increases.

In many modern applications, although data is sampled in some high dimensional space $\R^D,$ we see that the {\em intrinsic} dimension of the data is much smaller. In these cases, the support of the probabiliy measure $P$ is some domain $\O\subset\R^D$ with dimension $d$ that is much smaller than the ambient dimension $D.$ Now, the kernel density estimator defined in \eqref{eq:parzen-kde} no longer defines a meaningful probability distribution on $\O:$ in order to account for the low-dimensionality of $\O,$ we must normalize the estimator to be zero outisde of $\O$ and infinite on $\O.$ One proposed workaround is to define the estimator

\begin{equation}\label{eq:p-hat}
    \p_n(x) = \frac{1}{h_n^d}\sum_{i=1}^nK\left(\frac{\|X_i-x\|}{h_n}\right),
\end{equation}

a modified version of the estimator $KDE_n$ that uses the intrinsic dimension $d$ of the data, rather than its ambient dimension $D.$ We expect this new estimator to approximate the density of each point $x\in\O$ as the number of samples $n$ grows.

The setting where $\O$ is a $d-$dimensional Riemannian submanifold of $\R^D$ was studied in \cite{pelletier2005riemannian} (when $\O$ is known), \cite{ozakin2009submanifold} (when only $\dim \O = d$ is known) and \cite{berenfeld2020submanifold} (where $\dim \O$ is possibly unknown). It was shown that when the bandwidth parameters $h_n$ are chosen proportionally to $n^{-1/(d+4)},$ then the mean square error $\MSE{\p_n(x)}$ is of the order $O\left(\frac{1}{n^{4/(d+4)}}\right)$ as $n\to\infty,$ mirroring the convergence rate of $\mathrm{KDE}_n.$

However, the assumption that data lies on a manifold is very strong and is unreasonable in many important examples. For example, a natural way to ensure vector data has a low intrinsic dimension is to assume some level of sparsity. In a similar vein, if data is matrix valued, we may assume that the data is low rank or positive semi-definite. The sets of sparse vectors, low rank matrices, and positive semi-definite matrices do not form yet do have intrinsic senses of dimension that should be exploited when estimating densities on these spaces. In this paper, we study the estimator $\p_n(x)$ on {\em $d-$rectifiable sets}, which can be thought of as the natural measure-theoretic generalizations of $d-$dimensional submanifolds and include the aforementioned examples as special cases. An important property of rectifiable sets is that they have {\em approximate tangent spaces} at almost every point. When these approximate tangent spaces satisfy a certain condition, discussed in Section \ref{sec:proof-rectifiable}, we prove the following theorem:

\begin{theorem}[KDE on rectifiable sets]\label{thm:main-rectifiable}
    Let $\O\subset\R^D$ be a $d-$rectifiable set %
    and $P$ a probability measure on $\O$ with density $p$ with respect to the $d-$dimensional Hausdorff measure $\cH^d.$ Assume that the kernel $K$ is continuous and vanishes outside of $[0, 1]$ Then, when bandwidth parameters $h_n$ are chosen appropriately, %
    the equality
    \begin{equation}\label{eq:convergence-rate}
        \MSE{\p_n(x)} = O\left(\frac{1}{n^{2m/(d+2m)}}\right).
    \end{equation}
    holds with probability 1, where $m$ is a measure of how well $\O$ is approximated by its approximate tangent spaces.
\end{theorem}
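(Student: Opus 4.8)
\section*{Proof proposal}

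The plan is to control the mean square error through the classical bias--variance decomposition
\[
    \MSE{\p_n(x)} = \Bias{\p_n(x)}^{2} + \Var{\p_n(x)},
\]
and to estimate each term by reducing a kernel integral localized near $x$ to a flat integral over the approximate tangent plane $T_x\O$. By linearity of expectation,
\[
    \Exp{\p_n(x)} = \frac{1}{h_n^{d}}\,\Exp{K\!\left(\tfrac{\|X-x\|}{h_n}\right)} = \frac{1}{h_n^{d}}\int_{\O}K\!\left(\frac{\|y-x\|}{h_n}\right)p(y)\,d\cH^{d}(y),
\]
so the central object is the blow-up integral $I_h(x) := h^{-d}\int_{\O}K(\|y-x\|/h)\,p(y)\,d\cH^{d}(y)$, which, since $K$ is supported in $[0,1]$, only sees $\O\cap B(x,h)$. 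For $\cH^{d}$-almost every $x$ the rectifiable set $\O$ has an approximate tangent plane $T_x\O$, and the defining property of rectifiability says precisely that the rescaled and recentered copies $\tfrac1h(\O-x)$, weighted by $\cH^{d}$, converge weakly to $T_x\O$ weighted by $\cH^{d}$. Combined with the Lebesgue density theorem applied to $p$, this gives $I_h(x)\to p(x)\int_{T_x\O}K(\|z\|)\,d\cH^{d}(z) = C_K\,p(x)$ as $h\to 0$, where $C_K = \int_{\R^{d}}K(\|z\|)\,dz$; normalizing $K$ so that $C_K = 1$ (as is standard), the estimator is asymptotically unbiased at $\cH^{d}$-a.e.\ point.

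To upgrade this to a rate I would use the hypothesis of Section~\ref{sec:proof-rectifiable}: near $x$ the set $\O$ coincides, outside an exceptional subset of negligible $\cH^{d}$-measure, with the graph over $T_x\O$ of a map whose deviation from $T_x\O$ is controlled at order $m$ (the case $m=2$ recovering the $C^{2}$-submanifold setting). Splitting $\O\cap B(x,h)$ into this graph part and the exceptional remainder, changing variables via orthogonal projection onto $T_x\O\cong\R^{d}$, and Taylor-expanding both the graph's area element and the density $p$ about $x$, one uses that $z\mapsto K(\|z\|)$ is radially symmetric --- so its odd moments over $\R^{d}$ vanish --- to conclude that the leading error is $O(h^{m})$, i.e.\ $\Bias{\p_n(x)} = O(h_n^{m})$. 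Running the same argument with $K$ replaced by $K^{2}$, where only the zeroth-order blow-up is needed, gives $h_n^{-d}\Exp{K(\|X-x\|/h_n)^{2}} \to C_{K^{2}}\,p(x)$, hence
\[
    \Var{\p_n(x)} = \frac{1}{n}\,\Var{\frac{1}{h_n^{d}}K\!\left(\tfrac{\|X-x\|}{h_n}\right)} \;\le\; \frac{1}{n h_n^{d}}\cdot\frac{1}{h_n^{d}}\,\Exp{K\!\left(\tfrac{\|X-x\|}{h_n}\right)^{2}} = O\!\left(\frac{1}{n h_n^{d}}\right).
\]

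Combining the two estimates, $\MSE{\p_n(x)} = O(h_n^{2m}) + O(1/(n h_n^{d}))$, and balancing by taking $h_n \asymp n^{-1/(d+2m)}$ yields $\MSE{\p_n(x)} = O(n^{-2m/(d+2m)})$, which is exactly \eqref{eq:convergence-rate}; note this choice automatically gives $h_n\to 0$ and $nh_n^{d}\to\infty$. The phrase ``with probability $1$'' enters because the approximate tangent plane, the Lebesgue point property of $p$, and the order-$m$ approximation hypothesis all hold only for $\cH^{d}$-almost every $x$, and since $P$ is absolutely continuous with respect to $\cH^{d}$ they then hold for $P$-almost every $x$, i.e.\ with probability one for a sample point drawn from $P$.

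I expect the main difficulty to be the quantitative blow-up step. A general rectifiable set is not a graph, so one must carefully isolate the graph-like portion of $\O\cap B(x,h)$, show that the exceptional portion has $\cH^{d}$-measure negligible at the scale $h^{d+m}$, and control the Jacobian of the projection change of variables together with the Taylor remainders uniformly in $h$. Pinning down the precise meaning of ``how well $\O$ is approximated by its approximate tangent spaces'' --- and checking that it holds in the motivating examples of algebraic varieties and semi-algebraic sets --- is exactly what makes the exponent $m$ appear, and is where the real work lies.
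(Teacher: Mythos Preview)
Your overall architecture --- bias--variance decomposition, reducing $\Exp{\p_n(x)}$ to a blow-up integral via the change of variables $u=(y-x)/h$, rerunning the computation with $K^2$ for the variance, and balancing $h_n\asymp n^{-1/(d+2m)}$ --- is exactly what the paper does. Where you diverge is in how the bias rate $O(h^m)$ is obtained. The paper does \emph{not} decompose $\O\cap B(x,h)$ into a graph part plus an exceptional remainder, Taylor-expand area elements, or invoke odd-moment cancellation. Instead, the order-$m$ approximation is taken directly as a hypothesis (Assumption~\ref{ass:tangents}):
\[
\int_{(\O-x)/h} f(y)\,p(x+hy)\,d\cH^d(y) \;=\; p(x)\int_{T_x\O} f(y)\,d\cH^d(y) + O(h^m)
\]
for every compactly supported continuous $f$. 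Applying this with $f(u)=K(\|u\|)$ and the normalization $\int_{\R^d}K(\|u\|)\,du=1$ gives $\Bias{\p_n(x)}=O(h_n^m)$ in one line. So what you flag as ``the main difficulty'' is, in the paper's framing, not a difficulty at all --- it is the \emph{definition} of $m$.

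The graph-over-tangent-plane argument you sketch (projection change of variables, Jacobian expansion, parity cancellation) is precisely the content of Lemma~\ref{lem:convergence}, which the paper uses separately to \emph{verify} Assumption~\ref{ass:tangents} with $m=2$ in the locally smooth case, yielding Theorem~\ref{thm:settings}. In effect you have folded the proof of Theorem~\ref{thm:settings} into that of Theorem~\ref{thm:main-rectifiable}. This is not incorrect, but it misreads the logical structure: Theorem~\ref{thm:main-rectifiable} is deliberately stated so that all the geometry enters through the single integral hypothesis, and the hard work of establishing that hypothesis (graphs, exceptional sets, Jacobians) is deferred to concrete settings. Note in particular that for a general rectifiable set no graph decomposition with controlled remainder need exist, which is why the paper packages the rate as an assumption rather than attempting to derive it.
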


Note that this theorem mirrors the results of \cite{berenfeld2020submanifold,ozakin2009submanifold,pelletier2005riemannian} in a more general setting. As a consequence of this theorem, we prove that the convergence rate achieved in these previous works holds when $\O$ is locally a smooth manifold.\footnote{While our results are stated for sets that are locally smooth manifolds since we are interested in examples such as semi-algebraic sets, the hypothesis can be weakened to spaces that are locally only $C^3$ without changing the proof. Minor modifications may be made to the proof so that the result holds on spaces that are locally only $C^2$.} We emphasise that although this result can also be achieved with minor modifications to the arguments in \cite{berenfeld2020submanifold,ozakin2009submanifold,pelletier2005riemannian}, our perspective that this is a special case of a more general setting is novel.

\begin{theorem}[KDE on a.e. smooth spaces]\label{thm:settings}
    Suppse $\O$ is locally a smooth manifold almost everywhere. Assume that $K$ is twice differentiable and vanishes outside $[0, 1]$ and the density $p$ is twice differentiable. Then, when the bandwidth parameters $h_n$ are chosen in proportion to $n^{-1/(d+4)},$ the equality
    \begin{equation}\label{eq:convergence-rate-smooth}
       \MSE{\p_n(x)} = O\left(\frac{1}{n^{4/(d+4)}}\right).
    \end{equation}
    holds with probability 1.
\end{theorem}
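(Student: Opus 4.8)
The plan is to realize Theorem~\ref{thm:settings} as the special case $m=2$ of Theorem~\ref{thm:main-rectifiable}: I will verify that, under the stated hypotheses, $\O$ is approximated by its approximate tangent spaces to second order and that the bias of $\p_n$ is then $O(h_n^2)$, so that the general rate \eqref{eq:convergence-rate} collapses to \eqref{eq:convergence-rate-smooth}. Since $C^2$ regularity already suffices for the Taylor estimates below, nothing is lost in working with smooth $\O$.

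First I would fix a point $x$ at which $\O$ is locally a smooth manifold; by hypothesis the set of such $x$ carries full $\cH^d$-measure, and it is on this set that the conclusion holds with probability $1$. Near such an $x$, parametrize $\O$ as the graph of a twice-differentiable map $\psi\colon U\subseteq T_x\O\to (T_x\O)^{\perp}$ with $\psi(0)=0$; since $T_x\O$ is the classical tangent space --- which, on the smooth part of $\O$, coincides with the approximate tangent space appearing in Theorem~\ref{thm:main-rectifiable} --- we moreover have $D\psi(0)=0$. Taylor's theorem produces a constant $C=C(x)$ with $\|\psi(v)\|\le C\|v\|^2$ near $0$, and because a point $y=x+v+\psi(v)$ of $\O$ lies in $B(x,r)$ exactly when $\|v\|^2+\|\psi(v)\|^2<r^2$, this says every point of $\O\cap B(x,r)$ lies within $Cr^2$ of the affine subspace $x+T_x\O$ --- precisely the order-$2$ approximation condition of Section~\ref{sec:proof-rectifiable}.

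Next I would feed this into the bias analysis behind Theorem~\ref{thm:main-rectifiable}. The same expansion gives the area-distortion estimate $\sqrt{\det\!\left(I+D\psi(v)^{\T}D\psi(v)\right)}=1+O(\|v\|^2)$, and from $\|y-x\|^2=\|v\|^2+\|\psi(v)\|^2=\|v\|^2\bigl(1+O(\|v\|^2)\bigr)$ a second-order expansion of $K\!\left(\|y-x\|/h_n\right)$ in $h_n$, using that $K$ is twice differentiable. Together with the second-order Taylor expansion of $p$ along $x+T_x\O$ --- using that $p$ is twice differentiable --- these make the bias of $\p_n(x)$ equal to $O(h_n^2)$, which realizes $m=2$. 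Granting this, the bandwidth prescription $h_n\propto n^{-1/(d+2m)}=n^{-1/(d+4)}$ dictated by Theorem~\ref{thm:main-rectifiable} is exactly the one in the statement, and \eqref{eq:convergence-rate} reads
\begin{equation*}
    \MSE{\p_n(x)}=O\!\left(\frac{1}{n^{2m/(d+2m)}}\right)=O\!\left(\frac{1}{n^{4/(d+4)}}\right),
\end{equation*}
which is \eqref{eq:convergence-rate-smooth}; Theorem~\ref{thm:main-rectifiable} then completes the proof.

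The step I expect to be the main obstacle is uniformity of constants: the Taylor remainders above --- the bound $C(x)$, the bound on the second fundamental form, and the bound on $D^2p$ --- must be taken uniform over a fixed neighborhood of $x$ for the error terms to be genuinely $O(h_n^2)$ rather than merely $o(1)$, which one handles by restricting to a closed sub-ball on which the relevant $C^2$ data is bounded by continuity. One must then check this is compatible with $\O$ being only locally smooth \emph{almost everywhere} --- namely that the set of points admitting such a neighborhood still has full $\cH^d$-measure and lies inside the locus to which Theorem~\ref{thm:main-rectifiable} applies --- so that the asserted bound holds with probability $1$.
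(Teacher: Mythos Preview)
Your approach is essentially the paper's: verify assumption~\ref{ass:tangents} with $m=2$ via a graph parametrization and Taylor expansion, then invoke Theorem~\ref{thm:main-rectifiable}. The paper packages the verification as a separate lemma (Lemma~\ref{lem:convergence}) stating
\[
\int_{(\O-x)/h}f(y)\,\theta(x+hy)\,d\cH^d(y)=\theta(x)\int_{T_x\O}f(y)\,d\cH^d(y)+O(h^2)
\]
for even, compactly supported $C^2$ functions $f$ and $C^2$ multipliers $\theta$, proved with the same graph chart, Jacobian expansion, and normal Taylor expansion you outline; it then sets $f(\cdot)=K(\|\cdot\|)$, $\theta=p$ and reads off $m=2$.

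There is one genuine omission in your sketch. The ``second-order Taylor expansion of $p$'' contributes a first-order term $h_n\langle Dp(x),u\rangle$ after the rescaling $v=h_n u$, and nothing you have written makes this vanish; as stated your argument yields only $O(h_n)$ bias, hence $m=1$. The paper kills this term (and the analogous $O(h)$ term coming from the normal expansion of the kernel) by observing that $u\mapsto K(\|u\|)$ is \emph{even}, so its product with the odd functions $\langle Dp(x),u\rangle$ and $\langle D_\perp K,\,D^2\psi(0)(u,u)\rangle$ integrates to zero over $T_x\O\cong\R^d$. This radial symmetry of the kernel is the key cancellation you need to name explicitly to reach $m=2$; everything else in your outline is in order, including your remark on uniformity of constants over a closed sub-ball.
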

The proof of Theorem \ref{thm:settings} amounts to showing that a neighbourhood of $x\in\O$ is approximated by the tangent space $T_x\O$ with only $O(h_n^2)$ error. Note that since algebraic varieties and semi-algebraic sets are smooth almost everywhere, this includes the case that $\O$ is a $d-$dimensional algebraic variety or semi-algebraic set.

\subsection*{Outline} The rest of this section contains a brief overview of related literature. In Section \ref{sec:prelim}, we discuss all necessary background material on $d-$rectifiable sets. The proofs of Theorems \ref{thm:main-rectifiable} and \ref{thm:settings} are contained in Section \ref{sec:proof-rectifiable} and Section \ref{sec:settings} respectively. We conclude with Section \ref{sec:numerics} which details a numerical experiment using $\p_n$ to estimate a known density function on sparse vectors. 

\subsection*{Related Literature} Although little attention has been paid to density estimation on non-smooth spaces, there is a wealth of literature on nonparametric density estimation on manifolds, beginning with \cite{hendriks}. The work \cite{hendriks} uses the techniques of Fourier analysis to estimate probability densities on manifolds, yet is limited since it requires knowledge of the eigenfunctions of the Laplace-Beltrami operator on the space. The kernel density estimator \eqref{eq:p-hat} on Riemannian submanifolds was first studied in \cite{pelletier2005riemannian}, but again is limited since knowledge of the space is required. This was generalized to unknown submanifolds by \cite{ozakin2009submanifold}. The analysis in \cite{ozakin2009submanifold} relies on the observation that compactly supported kernels essentially become delta functions as the bandwidth parameter approaches zero. This fact was first observed in \cite{diffusion} in the context of estimating the Laplace-Beltrami operator on manifolds. The setting where data lies on a manifold with boundary was studied in \cite{berry-sauer}, where an additional estimator known as the boundary direction estimator is used to renormalize kernels so that density estimations are consistent as points approach the boundary of the space without needing prior knowledge of the boundary's location. 

Our analysis in the proofs of Theorems \ref{thm:main-rectifiable} and \ref{thm:settings} relies on the existence of (approximate) tangent spaces to the domain $\O.$ The work \cite{kim-park} also focuses on the existence of tangent spaces and introduces a kernel density estimator that is defined on the tangent space to the manifold $\O.$ Although it is shown that this estimator does converge to the true denstiy on the domain, this approach is severely limited since it requires knowledge and evaluation of the exponential map on $\O.$ 

More recently, kernel density estimation on unknown submanifolds without boundary was studied in \cite{berenfeld2020submanifold}. Under certain conditions relating to the reach of the submanifold, \cite{berenfeld2020submanifold} provides convergence rates of the kernel density estimator that depend on the smoothness of the probability density function. This convergence rate mirrors the convergence rate in our Theorem \ref{thm:main-rectifiable}, however our result is dependent on the smoothness of the data's domain and not the probability density function itself. The analysis in \cite{berenfeld2020submanifold} uses local parametrizations of the space by its tangent space, much like our arguments in the proof of Theorem \ref{thm:settings}. We emphasise that our main result Theorem \ref{thm:main-rectifiable} is not predicated on the existence of such parametrizations since they do not exist in general for $d-$rectifiable sets.

\section{Preliminaries}\label{sec:prelim}

In this section we provide a brief background on rectifiable sets. For a more thorough treatment of the material, we refer the reader to \cite{simon-gmt}. First, we recall the definition of $d-$rectifiability.

\begin{definition}[$d-$Rectifiable]
    A set $\O\subset\R^D$ is $d-$rectifiable if it can be decomposed as $\O = \O_0 \cup(\cup_{j\in\N}F_j(A_j))$ where $\cH^d(\O_0) = 0,$ $A_j\subset \R^d$ and each map $F_j:\R^d\to\R^D$ is Lipschitz.
\end{definition}

Rectifiable subsets of $\R^D$ can be viewed as a natural measure-theoretic generalization of embedded submanifolds of $\R^D.$ Indeed, for each point $x\in\O$ there is some neighbourhood that is covered by countably many embedded $d-$dimensional $C^1-$submanifolds of $\R^D.$ An alternative characterization of $d-$rectifiable sets arises by considering {\em approximate tangent spaces}.

\begin{definition}[Approximate tangent space]
    Let $\O\subset\R^D$ be $\cH^d-$measurable and $\theta$ a positive $\cH^d-$measurable function on $\O$ with $\int_{\O\cap K}\theta d\cH^d<\infty$ for all compact sets $K\subset\R^D.$ For $x\in\O,$ the $d-$dimensional subspace $T_x\O\subset\R^D$ is an approximate tangent space to $\O$ with respect to $\theta$ if the equality
    \begin{equation}\label{eq:approx-tangent}
        \lim_{h\to 0 }\int_{(\O-x)/h}f(y)\theta(x+hy)d\cH^d(y) = \theta(x)\int_{T_x\O}f(y)d\cH^d(y)
    \end{equation}
    holds for all continuous compactly supported functions $f.$ Here, the set $(\O-x)/h = \{y\in \R^D : x+hy\in\O\}$ is the {\em blow up} of $\O$ around $x.$ 
\end{definition}

Note that any tangent space is also an approximate tangent space. In particular, if a set $\O$ is locally a differentiable manifold at $x,$ then $\O$ has an approximate tangent space at $x.$ The following theorem shows that a $d-$rectifiable set $\O\subset\R^D$ has approximate tangent spaces $\cH^d-$almost everywhere.

\begin{theorem}[{\cite[Theorem 3.1.9]{simon-gmt}}]\label{thm:rect-iff-tangent}
    Suppose $\O\subset\R^D$ is $\cH^d-$measurable and $\theta$ a positive $\cH^d-$measurable function on $\O$ with $\int_{\O\cap K}\theta d\cH^d<\infty$ for all compact sets $K\subset\R^D.$ Then, $\O$ is $d-$rectifiable if and only if $\O$ has an approximate tangent space $T_x\O$ with respect to $\theta$ for $\cH^d-$almost every $x\in\O.$
\end{theorem}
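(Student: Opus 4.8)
The statement is the classical characterisation of rectifiable sets by approximate tangent planes, and the plan is to reproduce the standard geometric‑measure‑theory argument (\cite{simon-gmt}). First I would record the measure‑theoretic reformulation of \eqref{eq:approx-tangent}: writing $\mu=\theta\,\cH^d$ on $\O$ and letting $\mu_{x,h}$ denote the pushforward of $h^{-d}\mu$ under $y\mapsto(y-x)/h$, the scaling $\cH^d\mapsto h^d\cH^d$ shows that \eqref{eq:approx-tangent} is exactly the statement that $\mu_{x,h}\rightharpoonup\theta(x)\,\cH^d|_{T_x\O}$ weakly‑$*$ as $h\to0$. Since $\theta>0$ and is locally $\cH^d$‑integrable, $\O$ is a countable union of pieces of finite $\cH^d$‑measure on which $\theta$ is bounded below; as both $d$‑rectifiability and the a.e.-tangent-plane condition are countably additive in $\O$ and unaffected by deleting $\cH^d$‑null sets, I may assume $\cH^d(\O)<\infty$ throughout.

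For the forward implication I would use that a $d$‑rectifiable set is, modulo an $\cH^d$‑null set, a countable \emph{disjoint} union $\O=N\cup\bigcup_j S_j$ with each $S_j$ a Borel subset of an embedded $d$‑dimensional $C^1$ submanifold $M_j\subset\R^D$ (this comes from Rademacher's theorem, a Lusin‑type $C^1$ approximation of Lipschitz maps, and the implicit function theorem on the set where the differential has rank $d$, the complementary image being $\cH^d$‑null). Then for fixed $j$ and $\cH^d$‑a.e.\ $x\in S_j$, the density theorem for Hausdorff measures gives that $N$ and every $S_k$ with $k\ne j$ have $d$‑density zero at $x$, that $x$ has $d$‑density $1$ in $S_j$ relative to $M_j$, and that $x$ is a Lebesgue point of $\theta$ with respect to $\cH^d|_{S_j}$. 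Splitting the integral in \eqref{eq:approx-tangent} over $N$, $\bigcup_{k\ne j}S_k$ and $S_j$: the first two pieces vanish in the blow‑up by the density‑zero property, while the contribution of $S_j$ has the same limit as the blow‑up of $M_j$, namely $\theta(x)\int_{T_xM_j}f$, because $M_j$ is $C^1$ and $\theta$ is approximately continuous at $x$. This yields \eqref{eq:approx-tangent} with $T_x\O:=T_xM_j$.

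The reverse implication is the substantive one. From $\mu_{x,h}\rightharpoonup\theta(x)\,\cH^d|_{T_x\O}$, and the fact that the limit is supported on $T_x\O$, I would extract the \emph{measure‑theoretic cone condition}: for every $\e>0$ and $\cH^d$‑a.e.\ $x$ there is $r(x)>0$ so that for all $h<r(x)$,
\begin{equation*}
  \cH^d\!\left(\{\,y\in\O\cap B(x,h):\operatorname{dist}(y-x,\,T_x\O)>\e|y-x|\,\}\right)<\e\,h^d .
\end{equation*}
Using separability of the Grassmannian $G(D,d)$, I would then partition $\O$ (mod an $\cH^d$‑null set) into countably many sets $\O_k$ on each of which the tangent plane stays within angle $\e$ of a fixed $d$‑plane $\pi_k$, the radius $r(\cdot)$ stays above a fixed $r_k>0$, and $\operatorname{diam}\O_k<r_k$. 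It then suffices to show that, for $\e$ small, each $\O_k$ is contained in a single Lipschitz graph over $\pi_k$, since a countable union of Lipschitz graphs is $d$‑rectifiable. For this I would restrict to the (conull) set of density‑$1$ points of $\O_k$ and show that any two such points $x,x'$ satisfy $\operatorname{dist}(x-x',\pi_k)\le c(\e)\,|x-x'|$, because otherwise the cone conditions centred at $x$ and at $x'$, tested at scale $h$ comparable to $|x-x'|$, are incompatible with the positive‑density presence of $\O_k$ near both points.

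The main obstacle is precisely this last step — upgrading the \emph{measure‑theoretic} cone condition to an \emph{honest pointwise} Lipschitz‑graph structure — which is why the reduction to density‑$1$ points together with a Besicovitch‑type covering argument is essential, and why in practice one cites \cite{simon-gmt}. The forward implication, by contrast, is routine once the $C^1$ stratification of rectifiable sets and the Hausdorff‑measure density theorem are available.
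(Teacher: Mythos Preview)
The paper does not prove this theorem at all: it is stated with a citation to \cite[Theorem~3.1.9]{simon-gmt} and then immediately used as a black box (``Theorem~\ref{thm:rect-iff-tangent} will be the main tool necessary in the proof of Theorem~\ref{thm:main-rectifiable}''). So there is nothing in the paper to compare your argument against; the paper's ``proof'' is the citation.

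Your proposal is a faithful outline of the standard argument in \cite{simon-gmt}. The forward implication is set up correctly: the $C^1$ decomposition of a rectifiable set up to an $\cH^d$-null set, the density theorem to kill the contribution of the other strata in the blow-up, and the Lebesgue-point property of $\theta$ to pull out the factor $\theta(x)$. For the converse you correctly identify the key mechanism --- extracting a measure-theoretic cone condition from the weak-$*$ convergence $\mu_{x,h}\rightharpoonup\theta(x)\,\cH^d|_{T_x\O}$, discretising via the separability of the Grassmannian, and then upgrading to a Lipschitz-graph structure on density-$1$ points --- and you are honest that this last step is where the real work sits. One point to watch: the cone inequality you wrote controls $\cH^d$-mass, but what you actually get from the blow-up limit is control of $\mu$-mass; you need the local lower bound on $\theta$ (which you arranged in your first paragraph) to pass from one to the other. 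With that caveat, the sketch is sound and matches the cited source.
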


Theorem \ref{thm:rect-iff-tangent} will be the main tool necessary in the proof of Theorem \ref{thm:main-rectifiable}. We will see that the rate at which the mean square error $\MSE{\p_n(x)}$ converges to 0 is controlled by the rate of convergence in \eqref{eq:approx-tangent}. Since any tangent space to $\O$ is also an approximate tangent space, if $\O$ is locally a $d-$dimensional differentiable manifold almost everywhere then $\O$ is $d-$rectifiable as a direct consequence of Theorem \ref{thm:rect-iff-tangent}. In particular, objects such as $d-$dimensional algebraic varieties and semi-algebraic sets are also $d-$rectifiable sets. %

\begin{lemma}\label{lem:whitney}
    Suppose $\O$ is a $d-$dimensional algebraic variety or semi-algebraic set. Then, $\O$ is $d-$rectifiable.
\end{lemma}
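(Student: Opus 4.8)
The plan is to verify the definition of $d$-rectifiability directly, using the fact that real algebraic varieties and semi-algebraic sets are \emph{stratifiable} into finitely many smooth pieces. Since the real points of an algebraic variety form a semi-algebraic set, it suffices to treat the semi-algebraic case. I would invoke the semi-algebraic cell decomposition theorem (equivalently, a semi-algebraic Whitney stratification, whence the name of the lemma): there is a finite partition $\O = \bigsqcup_{i=1}^{N} C_i$ in which each cell $C_i$ is semi-algebraically $C^\infty$-diffeomorphic to an open cube $(0,1)^{d_i}$ via some map $\phi_i\colon(0,1)^{d_i}\to C_i\subset\R^D$, with $d_i\le d$ for all $i$ and $d_i=d$ for at least one $i$ since $\dim\O=d$.

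The argument then splits into two steps. First, set $\O_0=\bigcup_{d_i<d}C_i$; each such $C_i$ is contained in a $C^1$ submanifold of dimension $d_i<d$, hence is a locally Lipschitz image of a subset of $\R^{d_i}$, and since Lipschitz maps do not raise Hausdorff dimension while $\cH^d$ vanishes on any subset of $\R^{d_i}$, we get $\cH^d(C_i)=0$; being a finite union, $\cH^d(\O_0)=0$. Second, for each $i$ with $d_i=d$ exhaust the cube $(0,1)^d$ by the compact subcubes $Q_k=[1/k,1-1/k]^d$, so that $C_i=\bigcup_{k}\phi_i(Q_k)$. On each $Q_k$ the $C^\infty$ map $\phi_i$ has bounded derivative, hence is Lipschitz there, and a coordinatewise McShane (or Kirszbraun) extension produces a global Lipschitz map $F\colon\R^d\to\R^D$ with $F|_{Q_k}=\phi_i|_{Q_k}$, exhibiting $\phi_i(Q_k)=F(Q_k)$ with $Q_k\subset\R^d$. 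Re-indexing the countable family obtained by ranging over the finitely many top cells and over $k\in\N$, we arrive at $\O=\O_0\cup\bigl(\bigcup_{j\in\N}F_j(A_j)\bigr)$ with $\cH^d(\O_0)=0$, $A_j\subset\R^d$, and each $F_j\colon\R^d\to\R^D$ Lipschitz, which is exactly the definition of $d$-rectifiability.

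There is no genuine analytic obstacle here; the entire content is quoting the cell decomposition (stratification) theorem, which is classical. The only points requiring mild care are that the cell maps are $C^\infty$ on a \emph{bounded} domain and so only \emph{locally} Lipschitz, which forces the compact-exhaustion step in order to stay within the letter of the definition, and the observation that an algebraic variety is in particular semi-algebraic (alternatively, one argues directly that the singular locus of a $d$-dimensional variety is a subvariety of strictly smaller dimension, hence $\cH^d$-null, while its complement is a $d$-dimensional $C^\infty$ manifold covered by countably many $C^1$ charts). I would also remark, as an aside, that the conclusion follows equally from Theorem \ref{thm:rect-iff-tangent}: off the $\cH^d$-null singular set, $\O$ is a $d$-dimensional smooth manifold and hence has a genuine tangent space at each point, which is in particular an approximate tangent space with respect to $\theta\equiv 1$ (the restriction of $\cH^d$ to $\O$ being locally finite on a variety or semi-algebraic set), so Theorem \ref{thm:rect-iff-tangent} applies.
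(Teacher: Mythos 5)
Your proof is correct, and your primary argument takes a genuinely different route from the paper's. The paper simply invokes a Whitney stratification to produce a $d$-dimensional tangent space at $\cH^d$-almost every point, observes that a tangent space is in particular an approximate tangent space, and then concludes via Theorem~\ref{thm:rect-iff-tangent}. You instead verify the definition of $d$-rectifiability directly: cell-decompose, discard the $\cH^d$-null lower-dimensional cells into $\O_0$, exhaust each top-dimensional cell by compact subcubes on which the $C^\infty$ cell parametrization is Lipschitz, and extend to globally Lipschitz $F_j\colon\R^d\to\R^D$ by McShane/Kirszbraun. Your route is more elementary in that it does not need the characterization theorem at all, and it has the side benefit of not implicitly requiring local finiteness of $\cH^d\llcorner\O$ --- a hypothesis of the approximate-tangent-space framework that the paper's proof uses without comment but which you explicitly flag. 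The paper's route buys brevity and dovetails with the conceptual thread of the paper (approximate tangent spaces are what the later estimates actually use). You also note the paper's argument as an aside at the end, so you have effectively supplied both proofs; either one would do.

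Two very minor points. First, the paper names the decomposition a Whitney stratification, whereas cell decomposition suffices (the Whitney regularity conditions are not used); you are right that the extra structure is inessential for this lemma. Second, in your step showing $\cH^d(C_i)=0$ for $d_i<d$, the phrase ``locally Lipschitz image of a subset of $\R^{d_i}$'' is fine, but to be fully tidy one should cover $(0,1)^{d_i}$ by countably many compact pieces on which $\phi_i$ is Lipschitz before concluding, exactly as you later do for the top cells; you clearly have this in mind, so this is only an expository remark.
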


\begin{proof}
    If $\O$ is a $d-$dimensional algebraic variety or semi-algebraic set, there exists a {\em Whitney stratification} of $\O$ - a decomposition of $\O$ into finitely many non-interecting smooth manifolds with dimension at most $d$ that satisfy certain compatibility conditions, see \cite{whitney}. In particular, for $\cH^d-$almost every $x\in\O,$ there exists a $d-$dimensional tangent space $T_x\O$ to $\O$ at $x.$ Since $T_x\O$ is also an {\em approximate} tangent space, $\O$ is $d-$rectifiable.
\end{proof}

\section{Proof of Theorem \ref{thm:main-rectifiable}}\label{sec:proof-rectifiable}

In this section, we prove Theorem \ref{thm:main-rectifiable}. First, we fix some assumptions on the set $\O$ and the kernel $K.$

{\bf Assumptions.} We fix the following setting:
\begin{enumerate}[label={A.\arabic*}]
    \item The kernel $K$ is continuous and vanishes outside of the interval $[0, 1],$ and the equality $\int_{\R^d}K(\|u\|)du = 1$ holds.
    \item \label{ass:tangents} The domain $\O\subset\R^D$ is $d-$rectifiable and the approximate tangent spaces to $\O$ with respect to $p$ satisfy
    \begin{equation*}
    \int_{(\O-x)/h}f(y)p(x+hy)d\cH^d(y) = p(x)\int_{T_x\O}f(y)d\cH^d(y) + O(h^m)
    \end{equation*}
    for some fixed $m>0$ and each compactly supported continuous function $f.$
\end{enumerate}

The assumption on $K$ is standard and is satisfied by most reasonable examples. The assumption on the approximate tangent spaces of $\O$ is necessary to quantify the rate at which the mean squared error $\MSE{\p_n(x)}$ converges to zero as the number of samples $n$ grows. Note that without this assumption, we may still conclude that the equality $\lim_{n\to\infty}\MSE{\p_n(x)} = 0$ holds, but cannot quantify the speed at which this convergence occurs.

Recall that the mean square error of the predictor $\p_n(x)$ can be decomposed into its squared bias and variance:

\begin{align*}
    \MSE{\p_n(x)} &= \Exp{\left(\Exp{\p_n(x)} - p(x)\right)^2} \\
    &= \left(\Exp{\p_n(x)} - p(x)\right)^2 + \Exp{\left(\Exp{\p_n(x)}-p(x)\right)^2}\\
    &=\Bias{\p_n(x)}^2 + \Var{\p_n(x)}
\end{align*}

We prove that both the bias and variance of $\p_n$ converge to zero as the number of samples $n$ grows. This argument is similar in spirit to those of \cite{parzen}, \cite{pelletier2005riemannian}, and \cite{ozakin2009submanifold}. We begin by considering the bias $\Bias{\p_n(x)}.$

\begin{lemma}\label{lem:unbiased}
    For $\cH^d-$almost every $x\in\O$ the equality
    $$\frac{1}{h^d}\int_{\O}K\left(\frac{\|x-y\|}{h}\right)dP(y) = p(x) + O(h^m).$$
    holds. In particular, if $h_n$ is chosen such that $\lim_{n\to\infty}h_n= 0,$ then $\Bias{\p_n(x)} = O(h_n^{m})$ with probability 1.
\end{lemma}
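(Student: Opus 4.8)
The plan is to rewrite the integral as a blow-up integral centered at $x$ and then apply the quantitative approximate tangent space hypothesis \ref{ass:tangents}. First I would perform the change of variables $y = x + hu$, so that $dP(y) = p(y)\,d\cH^d(y) = p(x+hu)\,d\cH^d(u)$ transforms (up to the Jacobian scaling inherent in Hausdorff measure on the blow-up, which contributes the factor $h^d$) and
\[
\frac{1}{h^d}\int_{\O}K\!\left(\frac{\|x-y\|}{h}\right)dP(y) = \int_{(\O-x)/h}K(\|u\|)\,p(x+hu)\,d\cH^d(u).
\]
Here the compactly supported continuous function to which I apply \ref{ass:tangents} is $f(u) = K(\|u\|)$: it is continuous by assumption A.1 and supported in the closed unit ball since $K$ vanishes outside $[0,1]$. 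Applying \ref{ass:tangents} gives
\[
\int_{(\O-x)/h}K(\|u\|)\,p(x+hu)\,d\cH^d(u) = p(x)\int_{T_x\O}K(\|u\|)\,d\cH^d(u) + O(h^m).
\]
Finally, since $T_x\O$ is a $d$-dimensional linear subspace, $\cH^d$ restricted to it is $d$-dimensional Lebesgue measure under a linear isometry $T_x\O \cong \R^d$, so $\int_{T_x\O}K(\|u\|)\,d\cH^d(u) = \int_{\R^d}K(\|u\|)\,du = 1$ by the normalization in A.1. This yields the claimed equality $p(x) + O(h^m)$ for $\cH^d$-a.e.\ $x$ (namely those $x$ at which the approximate tangent space exists, which by Theorem \ref{thm:rect-iff-tangent} is $\cH^d$-a.e.).

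For the second assertion, I would write the bias as $\Bias{\p_n(x)} = \Exp{\p_n(x)} - p(x)$ and observe that, since $X_1,\dots,X_n$ are i.i.d.\ with law $P$,
\[
\Exp{\p_n(x)} = \frac{n}{h_n^d}\,\Exp{K\!\left(\frac{\|X_1-x\|}{h_n}\right)} = \frac{1}{h_n^d}\int_{\O}K\!\left(\frac{\|x-y\|}{h_n}\right)dP(y),
\]
wait—here I must be careful: the estimator \eqref{eq:p-hat} is a sum of $n$ terms without a $1/n$ factor, so in fact $\Exp{\p_n(x)} = \frac{n}{h_n^d}\int_\O K(\|x-y\|/h_n)\,dP(y)$, which diverges. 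The correct reading, consistent with Parzen-type estimators and with the statement of the lemma, is that $\p_n$ carries a $1/n$ normalization, so that $\Exp{\p_n(x)}$ equals the integral in the first part of the lemma; I would state this normalization explicitly. Then the first part gives $\Exp{\p_n(x)} = p(x) + O(h_n^m)$, hence $\Bias{\p_n(x)} = O(h_n^m)$. The qualifier ``with probability 1'' refers to the fact that this holds whenever $x$ is a point at which the approximate tangent space exists; since the samples $X_i$ lie in $\O$ and the relevant null set has $\cH^d$-measure zero, $P$-almost every realization avoids it, giving the a.s.\ statement.

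The main obstacle I anticipate is the change-of-variables step: justifying rigorously that pushing $\cH^d$ on $\O$ through the dilation $y\mapsto (y-x)/h$ produces exactly $h^d$ times $\cH^d$ on the blow-up $(\O-x)/h$, and that this is the normalization already baked into the statement of \ref{ass:tangents} in \eqref{eq:approx-tangent}. This is really a statement about how Hausdorff measure scales under homotheties ($\cH^d(\lambda A) = \lambda^d \cH^d(A)$) combined with the fact that \eqref{eq:approx-tangent} is stated with the blow-up already scaled; once this bookkeeping is pinned down, the rest is a direct substitution of $f = K(\|\cdot\|)$ into the hypothesis. A secondary point worth a sentence is confirming that $K(\|\cdot\|)$ is a legitimate test function (continuous, compact support) so that \ref{ass:tangents} genuinely applies.
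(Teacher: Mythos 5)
Your proposal follows exactly the same route as the paper's proof: change of variables to the blow-up $(\O-x)/h$, apply the quantitative hypothesis \ref{ass:tangents} with the test function $f(u)=K(\|u\|)$ (continuous, supported in the unit ball), and then identify $\int_{T_x\O}K(\|u\|)\,d\cH^d(u)$ with $\int_{\R^d}K(\|u\|)\,du=1$ via a linear isometry. The two things you add beyond what the paper spells out are both genuine improvements in rigor: you are right that the estimator in \eqref{eq:p-hat} as written lacks the $\tfrac{1}{n}$ normalization (the bias computation in the paper and the variance formula in Lemma~\ref{lem:variance} are only consistent if $\p_n$ carries a $\tfrac{1}{n}$, so this is a typographical slip in the displayed definitions), and your reading of ``with probability~1'' — as the statement that the $\cH^d$-null set where the approximate tangent space fails to exist is $P$-null because $P\ll\cH^d$ — is the correct way to make that qualifier meaningful, since the bias at a fixed $x$ is otherwise deterministic.
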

\begin{proof}
    Since $p$ is the density of the probability measure $P$ with respect to the Hausdorff measure $\cH^d,$ the equalities
    \begin{align*}
        \frac{1}{h^d}\int_{\O}K\left(\frac{\|x-y\|}{h}\right)dP(y) &= \frac{1}{h^d}\int_\O K\left(\frac{\|x-y\|}{h}\right)p(y)dP(y) \\
        &= \int_{(\O-x)/h}K(\|u\|)p(x+uh)d\cH^d(u)
    \end{align*}
    hold, with the second equality following from the change of variables $u = \frac{1}{h}(x-y).$ As $\O$ is $d-$rectifiable, there exists an approximate tangent space $T_x\O$ with respect to the density $p$ for almost every $x$ in $\O$ and the equality
    \begin{align*}
        \int_{(\O-x)/h}K(\|u\|)p(x+uh)d\cH^d(u) = p(x)\int_{T_x\O}K(\|u\|)d\cH^d(u) + O(h^m)
    \end{align*}
    holds by assumption \ref{ass:tangents} since $K$ is a compactly supported continuous function. Now, since $\O$ is assumed to be a subset of $\R^D,$ the approximate tangent space $T_x\O$ is a $d-$dimensional subspace of $\R^D.$ So, there exists some isometry mapping $T_x\O$ to $\R^d.$ It follows that the equalities
    \begin{align*}
        \int_{T_x\O}K(\|u\|)d\cH^d(u) &= \int_{\R^d}K(\|u\|)d\cH^d(u)=1
    \end{align*}
    hold. Thus, we have that $\frac{1}{h^d}\int_{\O}K\left(\frac{\|x-y\|}{h}\right)dP(y) - p(x) = O(h^m)$ holds for almost every $x\in\O.$ 
    
    We now show that with probability 1, the equality $\Bias{\p_n(x)} = O(h_n^m)$ holds. Recall that by the definition of bias,
    \begin{align*}
        \Bias{\p_n(x)} &= \frac{1}{h_n^d}\Exp{K\left(\frac{\|x-y\|}{h_n}\right)} - p(x)\\
        &= \frac{1}{h_n^d}\int_\O K\left(\frac{\|x-y\|}{h_n}\right)dP(y) - p(x)
    \end{align*}
    By our previous argument, we have that $\Bias{\p_n(x)} = O(h_n^m).$ This concludes the proof.
\end{proof}

We now utilize similar arguments to show that the variance $\Var{\p_n(x)}$ converges to zero as the number of samples $n$ increases.

\begin{lemma}\label{lem:variance}
    Suppose that $h_n$ is chosen such that $\lim_{n\to\infty}h_n=0$ and $\lim_{n\to\infty}nh_n^d=\infty.$ Then, with probability 1, the equality holds for all $h<h_0$:
    $$\Var{\p_n(x)} = O\left(\frac{1}{nh_n^d}\right).$$
\end{lemma}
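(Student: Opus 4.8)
The plan is to mirror the structure of the bias argument from Lemma~\ref{lem:unbiased}, but now applied to a second moment. First I would recall that since $\p_n(x) = \frac{1}{n}\sum_{i=1}^n \frac{1}{h_n^d}K\!\left(\frac{\|X_i - x\|}{h_n}\right)$ is an average of $n$ i.i.d. terms, we have
\begin{equation*}
\Var{\p_n(x)} = \frac{1}{n}\Var{\frac{1}{h_n^d}K\!\left(\frac{\|X_1 - x\|}{h_n}\right)} \le \frac{1}{n}\,\Exp{\frac{1}{h_n^{2d}}K\!\left(\frac{\|X_1 - x\|}{h_n}\right)^2}.
\end{equation*}
So it suffices to bound the second moment $\Exp{\frac{1}{h_n^{2d}}K(\|X_1-x\|/h_n)^2}$ by $O(h_n^{-d})$.

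Next I would compute that second moment the same way as in Lemma~\ref{lem:unbiased}: write it as an integral against $dP = p\,d\cH^d$, apply the change of variables $u = \frac{1}{h}(x - y)$, and pull out one factor of $h_n^{-d}$ to land on the blow-up $(\O - x)/h_n$:
\begin{equation*}
\Exp{\frac{1}{h_n^{2d}}K\!\left(\frac{\|X_1-x\|}{h_n}\right)^2} = \frac{1}{h_n^d}\int_{(\O - x)/h_n} K(\|u\|)^2\, p(x + u h_n)\, d\cH^d(u).
\end{equation*}
Since $K$ is continuous and compactly supported, $K^2$ is also continuous and compactly supported, so assumption~\ref{ass:tangents} applies with $f = K^2$: the integral converges to $p(x)\int_{T_x\O} K(\|u\|)^2\,d\cH^d(u) + O(h_n^m)$, and via an isometry $T_x\O \cong \R^d$ this limit equals $p(x)\,\|K\|_{L^2(\R^d)}^2 < \infty$. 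Hence the whole integral is bounded by a constant for all $h_n$ below some threshold $h_0$, giving $\Exp{\frac{1}{h_n^{2d}}K(\cdot)^2} = O(h_n^{-d})$ and therefore $\Var{\p_n(x)} = O(1/(nh_n^d))$ for $\cH^d$-almost every $x$.

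Finally, for the ``with probability 1'' phrasing I would note, as in Lemma~\ref{lem:unbiased}, that the above is a deterministic statement holding at $\cH^d$-a.e.\ $x \in \O$; since $X_1,\dots,X_n$ are drawn from $P$ which has a density with respect to $\cH^d$, the point $x$ at which we estimate lies in the good set with probability $1$, and the hypotheses $h_n \to 0$, $nh_n^d \to \infty$ ensure the bound $O(1/(nh_n^d))$ is meaningful (tending to $0$). The main obstacle — really the only delicate point — is verifying that assumption~\ref{ass:tangents} may legitimately be invoked with $f = K^2$ rather than $f = K$; this is immediate since the class of admissible test functions in the definition of approximate tangent space (and in \ref{ass:tangents}) is all compactly supported continuous functions, and $K^2$ is such a function whenever $K$ is. Everything else is the same bookkeeping as the bias computation, plus the elementary inequality $\Var{Z} \le \Exp{Z^2}$.
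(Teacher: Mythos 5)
Your proposal is correct and follows essentially the same route as the paper: both proofs reduce the variance to a second-moment computation, perform the same change of variables onto the blow-up $(\O-x)/h_n$, and invoke assumption~\ref{ass:tangents} with the test function $f = K^2$ (which you correctly flag as the one point that needs checking, and which is admissible since $K^2$ is continuous and compactly supported). The only difference is a small streamlining: you discard the negative $\frac{1}{n}\bigl(\Exp{\cdot}\bigr)^2$ term up front via $\Var{Z}\le\Exp{Z^2}$, whereas the paper keeps it and separately shows it is $O(1/n)\subset O(1/(nh_n^d))$; both give the same bound.
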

\begin{proof}
    Recall that by the definition of variance,
    \begin{equation}\label{eq:var}
        \Var{\p_n(x)} = \frac{1}{n}\Exp{\frac{1}{h_n^{2d}}K^2\left(\frac{\|x-y\|}{h_n}\right)} - \frac{1}{n}\left(\Exp{\frac{1}{h_n^d}K\left(\frac{\|x-y\|}{h_n}\right)}\right)^2.
    \end{equation}
    We will consider each term in the sum independently. First, we consider the first term in \eqref{eq:var}
    \begin{align*}
        \frac{1}{n}\Exp{\frac{1}{h_n^{2d}}K^2\left(\frac{\|x-y\|}{h_n}\right)} &= \frac{1}{nh_n^d}\int_{\O}\frac{1}{h_n^d}K^2\left(\frac{\|x-y\|}{h_n}\right)p(y)dP(y).
    \end{align*}
    Using a similar argument to that in the proof of Lemma \ref{lem:unbiased}, we compute:
    \begin{align*}
        \frac{1}{nh_n^d}\int_{\O}\frac{1}{h_n^d}K^2\left(\frac{\|x-y\|}{h_n}\right)p(y)dP(y) = \frac{1}{nh_n^d}\left(\int_{\R^d}K^2(\|u\|)du +O(h_n^m)\right) \leq \frac{C p(x)}{nh_n^d} + O\left(\frac{h_n^m}{nh_n^{d}}\right),
    \end{align*}
    where $C \geq\int_{\R^d}K^2(\|u\|)du.$ Note that such a constant $C$ exists since $K$ is continuous and compactly supported. 
    By assumption, $\lim_{n\to\infty}h_n= 0$ and $\lim_{n\to\infty}nh_n^d=\infty$ and so it follows that the first term in \eqref{eq:var} converges to zero at the rate $O(1/(nh_n^d)).$ 
    
    Now, observe that by the arguments in the proof of Lemma \ref{lem:unbiased}, the equality holds:
    \begin{align*}
        \left(\Exp{\frac{1}{h_n^d}K\left(\frac{\|x-y\|}{h_n}\right)}\right)^2 = p^2(x) + O(h_n^m).
    \end{align*}
    Thus, dividing by the number of samples $n,$ the second term in \eqref{eq:var} is $O(1/n)$ as $n$ increases. Since we may assume that $h_n<1,$ note that $O(1/n)\subset O(1/(nh_n^d)).$ It follows that the equality $$\Var{\p_n(x)} = O\left(\frac{1}{nh_n^d}\right)$$ holds, as desired.
\end{proof}

The proof of Theorem \ref{thm:main-rectifiable} is now a simple combination of Lemmas \ref{lem:unbiased} and \ref{lem:variance}.

{\em Proof of Theorem \ref{thm:main-rectifiable}.} Since the mean square error of $\p_n(x)$ can be decomposed into squared bias and variance, we compute:
\begin{align*}
    \MSE{\p_n(x)} &= \Bias{\p_n(x)}^2 +\Var{\p_n(x)} \\
    &= O\left(h_n^{2m} + \frac{1}{nh_n^d}\right)
\end{align*}
Choosing the bandwidth parameter $h_n$ in proportion to $n^{-1/(d+2m)},$ the equality 
\begin{align*}
    \lim_{n\to\infty}\MSE{\p_n(x)} = O\left(\frac{1}{n^{2m/(d+2m)}}\right)
\end{align*}
holds, completing the proof. \qed

\section{Proof of Theorem \ref{thm:settings}}\label{sec:settings}

In this section, we prove that the convergence rate \eqref{eq:convergence-rate} holds when $\O$ is locally a smooth manifold almost everywhere. To this end, we introduce the following lemma, which quantifies the approximation error when integrating over the tangent space $T_x\O$ as opposed to the blow up $(\O-x)/h.$ The proof uses standard techniques but is included for completeness. %

\begin{lemma}\label{lem:convergence}
    Suppose there exists some $h_0>0$ such that $B(x, h_0)\cap\O$ is a smooth $d-$dimensional submanifold of $\R^D.$ Let $f$ be even, twice differentiable and compactly supported and $\theta$ a twice differentiable function on $\O.$ Then, the equality holds:
    $$\int_{(\O-x)/h}f(y)\theta(x+hy)d\cH^d(y) = \int_{T_x\O}f(y)\theta(x)d\cH^d(y) + O(h^2)$$
\end{lemma}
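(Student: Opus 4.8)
The plan is to work in local coordinates on the smooth submanifold $M := B(x, h_0) \cap \O$ and then expand the integrand in $h$. First I would fix an orthonormal basis of $T_x\O \subset \R^D$, giving an isometric identification $T_x\O \cong \R^d$, and choose $h_0$ small enough that the orthogonal projection $\pi : M \to T_x\O$ is a diffeomorphism onto its image $U \subset \R^d$ containing a neighbourhood of $0$; its inverse $\psi := \pi^{-1} : U \to M \subset \R^D$ is then a smooth parametrization with $\psi(0) = x$, $D\psi(0)$ the inclusion $\R^d \hookrightarrow T_x\O$, and (because we projected orthogonally) a second-order Taylor expansion $\psi(u) = x + u + \tfrac12 Q(u) + O(|u|^3)$, where $Q$ is the (vector-valued) second fundamental form at $x$ and the linear term $u$ is understood via the chosen basis. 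The area formula then rewrites the left-hand side of the asserted equality as $\int_{U/h} f\big(\tfrac{\psi(hv) - x}{h}\big)\,\theta(\psi(hv))\, J\psi(hv)\, dv$ after the substitution $u = hv$, where $J\psi = \sqrt{\det(D\psi^\T D\psi)}$ is the Jacobian factor; note $J\psi(0) = 1$.

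Next I would Taylor-expand each of the three factors in powers of $h$. Using the expansion of $\psi$, one gets $\tfrac1h(\psi(hv) - x) = v + \tfrac{h}{2} Q(v) + O(h^2)$, so $f\big(\tfrac{\psi(hv)-x}{h}\big) = f(v) + \tfrac{h}{2}\, Df(v)[Q(v)] + O(h^2)$ by the chain rule and twice-differentiability of $f$. Similarly $\theta(\psi(hv)) = \theta(x) + h\, D\theta(x)[v] + O(h^2)$ and $J\psi(hv) = 1 + O(h^2)$, the last because the first-order variation of the Jacobian at $u = 0$ vanishes (the first fundamental form is $I + O(|u|^2)$ since $D\psi(0)$ is an isometric embedding and the $O(|u|)$ correction to $D\psi$ is orthogonal to $T_x\O$, contributing only at order $|u|^2$ to $D\psi^\T D\psi$). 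Multiplying these three expansions, the product is $f(v)\theta(x) + h\big[\tfrac12 Df(v)[Q(v)]\,\theta(x) + f(v)\, D\theta(x)[v]\big] + O(h^2)$.

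The remaining point is that the order-$h$ term integrates to zero, which is where evenness of $f$ is used. The domain $U/h$ is symmetric under $v \mapsto -v$ up to an $O(h)$-small discrepancy near its boundary, but since all integrands are compactly supported (inside a fixed ball independent of $h$, once $h < h_0$) we may replace $U/h$ by a fixed ball $B_R$ with no error for small $h$. Then $v \mapsto f(v)\, D\theta(x)[v]$ is odd (even times linear) and integrates to zero over $B_R$; and $v \mapsto Df(v)[Q(v)]$ — writing $Q(v)$ componentwise as a quadratic form in $v$ and $Df(v)$ as a gradient of the even function $f$, hence odd in $v$ — is odd times quadratic, i.e.\ odd, so it too integrates to zero. (Concretely: $f$ even $\Rightarrow$ $\partial_i f$ odd $\Rightarrow$ $\sum_i \partial_i f(v)\, Q_k(v)$ is odd $\times$ even $=$ odd for each component $k$.) Collecting terms, $\int_{(\O-x)/h} f(y)\theta(x+hy)\, d\cH^d(y) = \int_{B_R} f(v)\theta(x)\, dv + O(h^2) = \int_{T_x\O} f(y)\theta(x)\, d\cH^d(y) + O(h^2)$, as claimed.

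The main obstacle is bookkeeping the $O(h^2)$ uniformly: one must check that the remainders in the Taylor expansions of $\psi$, $f \circ (\cdot)$, $\theta \circ \psi$, and $J\psi$ are bounded by $C h^2$ with $C$ independent of $v$ on the (compact) support, and that the boundary-of-domain replacement genuinely costs nothing. Both are routine given compact support and the smoothness hypotheses, but they are the places where rigour could slip. A secondary subtlety worth stating carefully is the vanishing of the first-order term of $J\psi$ — it is not automatic but follows from the orthogonal-projection choice of parametrization, so I would either include that one-line computation or cite it as the standard fact that geodesic-normal-type coordinates give $\sqrt{g} = 1 + O(|u|^2)$.
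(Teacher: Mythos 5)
Your proof is correct and follows essentially the same route as the paper's: a local graph/projection parametrization of $\O$ near $x$ with derivative equal to the inclusion $T_x\O\hookrightarrow\R^D$, Taylor expansion of $f$, $\theta$, and the Jacobian factor in $h$, the observation that the Jacobian is $1+O(h^2)$ because the order-$|u|$ correction to $D\psi$ is normal to $T_x\O$, and cancellation of the order-$h$ terms by parity of $f$. The only cosmetic difference is that the paper first peels off $\theta$ via its Taylor expansion and treats the two resulting integrals separately, whereas you multiply the three expansions together before integrating; the substance is identical.
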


\begin{proof}
    As the Hausdorff measure is preserved under affine isometries, we may assume without loss of generality that $x = 0$ and $T_x\O=\R^d\subset\R^D,$ i.e. $T_x\O = \{(z, 0)\in\R^D:z\in\R^d\}.$ So, we prove the following equality:
\begin{equation}
    \int_{\O/h}f(y)\theta(hy)d\cH^d(y) = \int_{\R^d}f(z, 0)\theta(0)dz + O(h^2)
\end{equation}

Note that since $f$ is assumed to be compactly supported, we may intersect the domain of the integral with some compact set. Thus, we may assume that all functions and derivatives are bounded on our domain. First, we expand the integrand using the Taylor expansion of the function $\theta.$ Since $\theta$ is assumed to be twice differentiable, the equality holds:
\begin{equation}\label{eq:theta-expand-integral}
    \int_{\O/h}f(y)\theta(hy)d\cH^d(y) = \theta(0)\int_{\O/h}f(y)d\cH^d(y) + h\int_{\O/h} f(y)\langle D\theta(0), y\rangle d\cH^d(y) + O(h^2)
\end{equation}

We will independently consider each integral in the sum. Since $\O$ is a smooth manifold about $0,$ for small enough $h$ we may assume that the domain $\O/h$ can be locally parametrized as $\{(y/h), \phi(y)/h: y\in U\}$ where $\phi$ is a smooth map with $\phi(0)=0$ and $D\phi(0)=0$ and $U$ is some neighbourhood of the origin in $\R^d.$ Define the map $\psi_h:\R^d\to\R^D$ as $\psi_h(z) = (z, \phi(hz)/h).$ We now perform a change of variables using this parametrization of $\O/h:$
\begin{align*}
    \int_{\O/h}f(y)d\cH^d(y) &= \int_{\R^d}f(z, \phi(hz)/h)J_h(z) d\cH^d(y)
\end{align*}
where $J_h(y) = \sqrt{\det(D\psi_h(z)^\T D\psi_h(z))}$ is the Jacobian determinant of the map $\psi_h.$ By definition, the differential $D\psi_h(z)$ is 
\begin{align*}
    D\psi_h(z) &= \begin{bmatrix}
        I_d \\ D\phi(hz)
    \end{bmatrix}
\end{align*}
where $I_d$ is the $d\times d$ identity matrix. We approximate $D\phi(hz)$ with a second order Taylor approximation around $0.$ First, note that the Taylor expansion of $\phi(hz)$ is given by 
\begin{equation}\label{eq:phi-taylor}
    \phi(hz) = \phi(0) + h D\phi(0)(z) + \frac{h^2}{2} D^2\phi(0)(z, z) + O(h^3)
\end{equation}
 and so taking the derivative with respect to $z$,
\begin{align*}
    D\phi(hz) &= D\phi(0) + h^2 D^2\phi(0)(z) + O(h^3) = h^2 D^2\phi(0)(z) + O(h^3)
\end{align*}
since $D\phi(0)=0$ by assumption. So, $D\psi_h(z)^\T D\psi_h(z) = I_d + h^4 (D\phi(0)(z))^\T(D\phi(0)(z)) + O(h^5).$ Using the Taylor expansion of the determinant near the identity, we compute
\begin{align*}
    \det(I_d+ h^4 (D\phi(0)(z))^\T(D\phi(0)(z)) + O(h^5)) &= 1 + h^4\tr((D\phi(0)(z))^\T(D\phi(0)(z))) + O(h^4)\\
    &= 1 + O(h^4).
\end{align*}
It follows that the equality
\begin{align*}
    \sqrt{\det(D\psi_h(z)^\T D\psi_h(z))} &= 1 + O(h^4)
\end{align*}
holds. In particular, we have shown that the equality holds:
\begin{align*}
    \int_{\O/h}f(y)d\cH^d(y) &= \int_{\R^d}f(z, \phi(hz)/h)dz + O(h^2)
\end{align*}
We now compute the Taylor expansion of $f(z, \phi(hz)/h)$ in the normal direction to $\R^d\subset\R^D.$ Given the Taylor expansion of $\phi(hz)$ in \eqref{eq:phi-taylor}, it follows that the equality
\begin{equation}\label{eq:f-expansion}
    f(z, \phi(hz)/h) = f(z, 0) + \frac{h}{2}\langle D_{\perp}f(z, 0), D^2\phi(0)(z,z)\rangle + O(h^2)
\end{equation}
holds, where $D_\perp f(z, 0)$ is the derivative of $f$ at $(z,0)$ in the normal direction. Note that since $f$ is assumed to be even, $D_\perp f(z, 0)$ is an odd function of $z.$ We compute
\begin{align*}
    \int_{\R^d}f(z, \phi(hz)/h)dz &= \int_{\R^d} f(z, 0)dz  + \frac{h}{2}\int_{\R^d}\langle D_{\perp}f(z, 0), D^2\phi(0)(z,z)\rangle dz + O(h^2) \\
    &=\int_{\R^d} f(z, 0)dz + O(h^2),
\end{align*}
where the final equality holds since the inner product $\langle D_{\perp}f(z, 0), D^2\phi(0)(z,z)\rangle$ is an odd function of $z.$ Thus, we have shown that the equality
\begin{align*}
    \theta(0)\int_{\O/h}f(y)d\cH^d(y) = \theta(0)\int_{\R^d}f(z, 0)dz + O(h^2)
\end{align*}
holds. We now need only show that the second term in the sum \eqref{eq:theta-expand-integral} is of the order $O(h^2).$ Again, we use the change of variables $y = (z, \phi(hz)/h):$
\begin{align*}
    h\int_{\O/h}f(y)\langle D\theta(0), y\rangle d\cH^d(y) &= h\int_{\R^d}f(z, \phi(hz)/h)\langle D\theta(0), (z, \phi(hz)/h)\rangle dz + O(h^2)
\end{align*}

Using the Taylor expansion of $\phi(hz)/h$ in \eqref{eq:phi-taylor} note that the equalities hold:
\begin{align*}
    \langle D\theta(0), (z, \phi(hz)/h)\rangle &= \langle D\theta(0), (z, 0) +(0, \frac{h}{2}D^2\phi(0)(z, z)+ O(h^2))\rangle \\
    &=\langle D\theta(0), (z, 0)\rangle + \frac{h}{2}\langle D\theta(0), (0, D^2\phi(0)(z, z))\rangle + O(h^2)
\end{align*}
Taking the product with the expansion of $f(z, \phi(hz)/h)$ in \eqref{eq:f-expansion}, we compute
\begin{align*}
    h\int_{\R^d}f(z, \phi(hz)/h)\langle D\theta(0), (z, \phi(hz)/h)\rangle dz
    &=h\int_{\R^d}f(z, 0)\langle D\theta(0), (z, 0)\rangle dz + O(h^2)%
\end{align*}
Since $f$ is an even function by assumption, the product $f(z, 0)\langle D\theta(0), (z, 0)\rangle$ is an odd function of $z$. It follows that the equality 
\begin{align*}
    \int_{\R^d}f(z, 0)\langle D\theta(0), (z, 0)\rangle dz = 0
\end{align*}
holds and thus only the $O(h^2)$ term remains. This completes the proof.
\end{proof}

{\em Proof of Theorem \ref{thm:settings}.} By Lemma \ref{lem:convergence}, the convergence rate in Theorem \ref{thm:main-rectifiable} holds with $m=2.$ It follows that the equality
\begin{align*}
    \MSE{\p_n(x)} = O\left(\frac{1}{n^{4/(d+4)}}\right)
\end{align*}
holds for almost every $x\in\O$ when $h_n$ is chosen in proportion to $n^{-1/(d+4)}$ as desired. \qed

We have shown that the kernel density estimator $\p_n(x)$ converges to to the true density $p(x)$ at the rate $O(n^{-4/(d+4)})$ on sets that are locally smooth $d-$dimensional manifolds, mirroring the classical results of \cite{parzen,cacoullos1966estimation}. By Lemma \ref{lem:whitney}, this includes all $d-$dimensional algebraic varieties and semi-algebraic sets. This is a much more rich class of spaces than the set of all $d-$dimensional manifolds. For example, the spaces of sparse vectors and low-rank matrices are algebraic varieties and the space of positive semi-definite matrices is a semi-algebraic set. Our result ensures that kernel density estimation can be used to estimate unknown density functions on these spaces and converges to the true denstiy at the same rate as classical kernel density estimation. The next section illustrates this convergence rate by estimating a known density function on sparse vectors.

\section{Example: Density Estimation on Sparse Data}\label{sec:numerics}

One common structure used in dimensionality reduction is assuming that data is $d-$sparse: although the data lives in the high-dimensional space $\R^D,$ each sample only has $d$ non-zero coordinates where $d$ is typically much smaller than $D.$ We now illustrate the convergence of the estimator $\p_n$ by estimating a probability density on $d-$sparse vectors.

We fix the following distribution on the set of $d-$sparse vectors in $\R^D:$ data is sampled from a mixture of Gaussians by uniformly at randomly selecting a $d-$dimensional coordiante hyperplane and randomly sampling a vector according to the standard $d-$dimensional normal distribution in the hyperplane. For a $d-$sparse vector $x\in\R^D,$ the true density of this distribution at the point $x$ is given by
\begin{equation}
    p(x) = \frac{1}{{D\choose d}}\cdot\frac{\exp\left(-\|x\|^2/2\right)}{\sqrt{(2\pi)^d}}.
\end{equation}
since there are ${D\choose d}$ $d-$dimensional coordinate planes and each hyperplane is chosen uniformly at random.

To estimate the density $p$ we use the Kernel density estimator $\p_n(x)$ with $K$ chosen to be the truncated Gaussian kernel
\begin{equation*}
    K\left(\frac{\|x-y\|}{h}\right) = \begin{cases}\frac{1}{h\sqrt{2\pi}(F(1)-F(-1))}\cdot \exp\left(-\frac{\|x-y\|^2}{2h^2}\right) & \|x-y\|\leq h \\
        0 & \|x-y\|>h
    \end{cases}
\end{equation*}
where $F$ is the cumulative distribution function of the standard normal distribution. We use the truncated Gaussian kernel rather than the standard Gaussian kernel since it is compactly supported on the interval $[0, 1].$

\begin{figure}[h]
    \centering
    \includegraphics[width=0.65\textwidth]{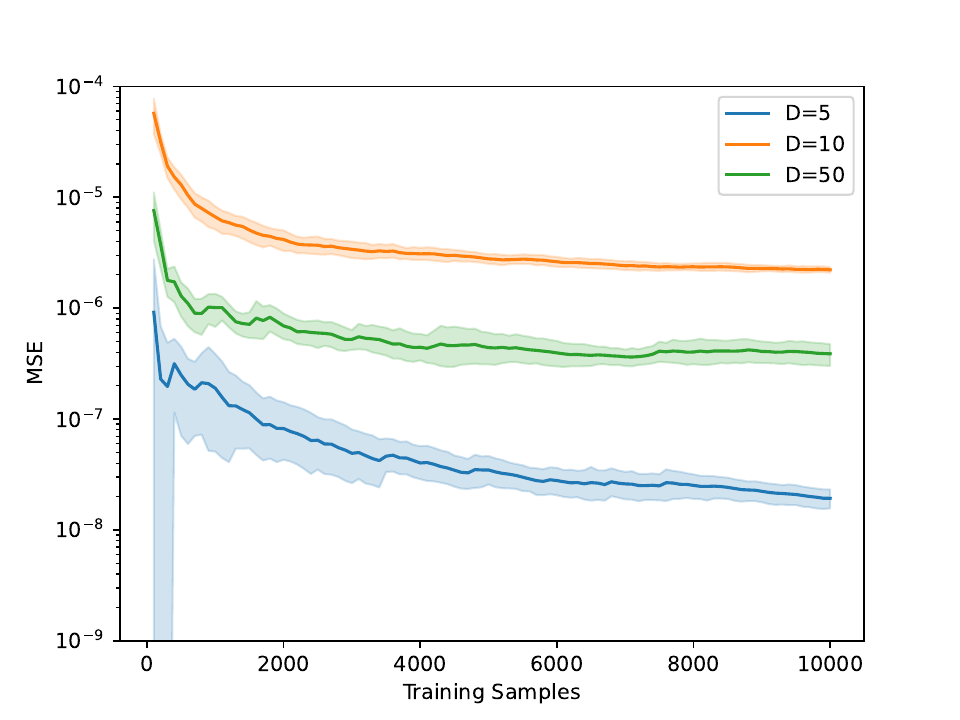}
    \caption{The empirical mean square error of the predictor $\p_n$ in various ambient dimensions and using varying sizes of training sets. The shaded regions indicate a 95\% confidence interval. Note that the curve corresponding to each ambient dimension $D$ has the same shape, illustrating that the convergence rate is independent of the ambient dimension and depends only on the intrinsic dimension $d.$}
    \label{fig:sparse-data}
\end{figure}

Note that the set of $d-$sparse vectors is a $d-$dimensional variety in $\R^D$ since it is the vanishing locus of the set of all degree $d+1$ monomials. Thus, from the result of Theorem \ref{thm:settings} we expect the MSE of the estimator $\p_n(x)$ to converge to 0 at the rate $O(n^{-4/(d+4)})$ regardless of the ambient dimension $D$ of the data. We fix the level of sparsity of the data at $d=3$ and test the estimator $\p_n$ in ambient dimensions $D=5, 10, 50.$ In each setting, we construct an estimator using training sets of various sizes: we begin with using only 100 training points and increase the number of data points in increments of 100 until the training set consists of 10,000 points. For a training set of size $n,$ the bandwidth parameter $h_n$ was chosen to be $n^{-1/7}$ in accordance with the choice of parameter in Theorem \ref{thm:settings}. We calculate the empirical mean square error of these predictors using a test set of 500 points. For each regime, we repeat the experiment 10 times. The results of this experiment are contained in Figure \ref{fig:sparse-data}.

\printbibliography

\end{document}